\theoremstyle{plain}
\newtheorem{corollary}{Corollary}
\newtheorem{proposition}{Proposition}
\newtheorem{remark}{Remark}
\newtheorem{theorem}{Theorem}
\theoremstyle{definition}
\newtheorem{example}{Example}
\DeclareMathOperator\id{id}
\begin{document}
\title[Invariance principle]{ Mean-type mappings and invariance principle}
\author{Janusz Matkowski}
\address{Institute of Mathematics, University of Zielona G\'{o}ra, Szafrana
4a, PL-65-516 Zielona G\'{o}ra, Poland}
\email{j.matkowski@wmie.uz.zgora.pl}
\author{Pawe\l\ Pasteczka}
\address{Institute of Mathematics, Pedagogical University of Krakow, Podchor\k{a}\.{z}ych 2, PL-30-084 Krak\'{o}w, Poland}
\email{pawel.pasteczka@up.krakow.pl}

\begin{abstract}
In the finite dimensional case, mean-type mappings, their invariant \ means,
relations between the uniqueness of invariant means and convergence of
orbits of the mapping, are considered. In particular it is shown, that the
uniqueness of an invariance mean implies the convergence of all orbits. A
strongly irregular mean-type mapping is constructed and its unique invariant
mean is determined. An application in solving a functional equation is
presented.
\end{abstract}

\maketitle

\section{Introduction}

\footnotetext{\textit{2010 Mathematics Subject Classification. }Primary
26E60, 39B22, Secondary 39B12
\par
\textit{Keywords and phrases:} means, mean-type mapping, invariant mean,
functional equation, iteration, orbit
\par
{}
\par
{}}

We deal with mean-type mappings, invariant \ means with respect to the
mean-type mappings, relations between the uniqueness of invariant means and
convergence of the orbits of the mean-type mappings, in general finite
dimensional case.

The main result of section 2, Theorem 1, says that, without any regularity
conditions, the orbits of the mean-type mapping converge if, and only if,
the mean-type map has a unique invariant mean. In particular, the uniqueness
of invariance mean implies the convergence of the orbits, and each
coordinate of the limit mean-type map is just the invariant mean. This
result generalizes the suitable result in \cite{M-P} where two-dimensional
case is considered.

In section 3 we show that the continuity of the mean-type mapping together
with its weak contractivity are sufficient conditions for the uniqueness of
the invariant mean.

In iteration theory of mean-type mappings, the continuity of the invariant
mean was assumed to guarantee its uniqueness (see for example \cite{Borwein,JM1999,JM2009} and \cite[p. 134, Theorem 83]{ToadeCostin}). In
a recent paper \cite{JM2013}, basing on the fact that every mean is
continuous on the main diagonal of its domain, it was shown that this
continuity assumption is redundant.

In section 4, making use of the discontinuous additive functions, we
construct a mean-type mapping, which is discontinuous at every point outside
of the diagonal and, applying Theorem 1, we show that the arithmetic mean is
a unique invariant mean for it.

In section 5, again applying Theorem 1, we find all continuous functions
which are invariant with respect to a given mean-type mapping and we show
that in this case the assumption of continuity is indispensable.

\section{Invariance principle}

Let $I\subset \mathbb{R}$ be an interval and $p\in \mathbb{N},$ be fixed.

A function $M:I^{p}\rightarrow I$ is called a mean in $I$ if it is internal,
that is if%
\begin{equation*}
\min \left( v_{1},...,v_{p}\right) \leq M\left( v_{1},...,v_{p}\right) \leq
\max \left( v_{1},...,v_{p}\right) \text{, \ \ \ \ \ }v_{1},...,v_{p}\in I\,%
\text{, }
\end{equation*}%
or, briefly, if%
\begin{equation*}
\min v\leq M\left( v\right) \leq \max v\text{, \ \ \ \ \ }v\in I^{p}\text{.}%
\,\text{ }
\end{equation*}

In the sequel, to avoid the trivial results, we assume that $p>1$.

A mapping $\mathbf{M}\colon I^{p}\rightarrow I^{p}$ is referred to as 
\emph{mean-type} if there exists some means $M_{i}\colon I^{p}\rightarrow I$%
, $i=1,\dots ,p$, such that $\mathbf{M}=(M_{1},\dots ,M_{p})$.

We say that a function $K\colon I^{p}\rightarrow \mathbb{R}$ is invariant
with respect to $\mathbf{M}$ (briefly $\mathbf{M}$-invariant), if $K\circ 
\mathbf{M}=K$.

Now, following the idea from \cite{P2018}, for a mean-type mapping $\mathbf{M}\colon I^{p}\rightarrow I^{p}$ we define
an orbit $\mathcal{O}_{\mathbf{M}}:I^{p}\rightarrow (I^{p})^{\infty }$ by 
\begin{equation*}
\mathcal{O}_{\mathbf{M}}(v):=\left( v,\mathbf{M}(v),\mathbf{M}^{2}(v),\dots
\right)
\end{equation*}%
where $\mathbf{M}^{n}$ is the $n$-th iterate of $\mathbf{M}$, $n=0,1,...$.
In view of well known isomorphism $(X^{Y})^{Z}\sim X^{Y\times Z}$, the
function $\mathcal{O}_{\mathbf{M}}^{\ast }\colon I^{p}\rightarrow I^{\infty
} $ is given by 
\begin{equation*}
\mathcal{O}_{\mathbf{M}}^{\ast }(v):=\left( v_{1},\dots ,v_{p},[\mathbf{M}%
(v)]_{1},\dots ,[\mathbf{M}(v)]_{p},[\mathbf{M}^{2}(v)]_{1},\dots ,[\mathbf{M%
}^{2}(v)]_{p},\dots \right) ,
\end{equation*}%
where $[\mathbf{M}^{n}(v)]_{i}$ stands for the $i$-th coordinate of the
vector $\mathbf{M}^{n}(v)$, $i\in \{1,\dots ,p\}$.

By $\ell ^{\infty }(I)$ denote the set of all bounded sequences $a=(a_{1},a_{2},%
\dots )$ with values in $I$.

For $p\in \mathbb{N}$ a function $\phi \colon \ell ^{\infty }(I)\rightarrow
I $ is called $\emph{p}$\emph{-limit-like} if for every $a=(a_{1},a_{2},%
\dots )\in \ell ^{\infty }(I)$

(i)$\ \ \ \ \phi (a_{1},a_{2},a_{3},\dots )=\phi
(a_{p+1},a_{p+2},a_{p+3},\dots )$, and

(ii) $\ \liminf_{n\rightarrow \infty }a_{n}\leq \phi (a_{1},a_{2},\dots
)\leq \limsup_{n\rightarrow \infty }a_{n}$.

Note that whenever the sequence $a$ is convergent, then $\phi
(a)=\lim_{n\rightarrow \infty }a_{n}$.

\begin{proposition}
Let $\mathbf{M}\colon I^{p}\rightarrow I^{p}$ be a mean-type mapping, and $%
\phi \colon \ell ^{\infty }(I)\rightarrow I$ be a $p$-limit-like function.
Then the function $\mathcal{M}_{\phi }\colon I^{p}\rightarrow \mathbb{R}$
given by $\mathcal{M}_{\phi }:=\phi \circ \mathcal{O}_{\mathbf{M}}^{\ast }$
is a mean on $I$, which is $\mathbf{M}$-invariant.

Conversely, every $\mathbf{M}$-invariant mean equals $\mathcal{M}_{\phi }$
for some p-limit-like function $\phi $.
\end{proposition}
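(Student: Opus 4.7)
\emph{Plan.} I treat the two directions separately; the forward direction is quick, while the converse requires a careful extension argument.

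For the forward direction, fix $v \in I^{p}$ and set $m_{n} := \min \mathbf{M}^{n}(v)$ and $M_{n} := \max \mathbf{M}^{n}(v)$. Since each coordinate of $\mathbf{M}$ is a mean, every entry of $\mathbf{M}^{n+1}(v)$ lies in $[m_{n}, M_{n}]$, so $(m_{n})$ is nondecreasing, $(M_{n})$ is nonincreasing, and the orbit $\mathcal{O}_{\mathbf{M}}^{\ast}(v)$ is a bounded sequence contained in $[\min v, \max v]$. Grouping the orbit into blocks of length $p$ one reads off $\liminf = \lim m_{n}$ and $\limsup = \lim M_{n}$, both in $[\min v, \max v]$. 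Property (ii) then forces $\mathcal{M}_{\phi}(v) \in [\min v, \max v]$, proving $\mathcal{M}_{\phi}$ is a mean. Invariance is immediate from (i): the sequence $\mathcal{O}_{\mathbf{M}}^{\ast}(\mathbf{M}(v))$ is precisely $\mathcal{O}_{\mathbf{M}}^{\ast}(v)$ with its first $p$ entries discarded.

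For the converse, given an $\mathbf{M}$-invariant mean $K$, I first define $\phi$ on the set $S := \{\mathcal{O}_{\mathbf{M}}^{\ast}(v) : v \in I^{p}\}$ by $\phi(\mathcal{O}_{\mathbf{M}}^{\ast}(v)) := K(v)$. This is well defined because the first $p$ coordinates of $\mathcal{O}_{\mathbf{M}}^{\ast}(v)$ already recover $v$. Compatibility with (i) holds because the $p$-shift of $\mathcal{O}_{\mathbf{M}}^{\ast}(v)$ equals $\mathcal{O}_{\mathbf{M}}^{\ast}(\mathbf{M}(v))$ and $K(\mathbf{M}(v)) = K(v)$ by invariance. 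For (ii), invariance gives $K(v) = K(\mathbf{M}^{n}(v)) \in [m_{n}, M_{n}]$ for every $n$, and passing to the limit places $K(v)$ in $[\liminf, \limsup]$ of the orbit sequence.

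The real work is to extend $\phi$ from $S$ to all of $\ell^{\infty}(I)$ while preserving (i) and (ii); this is the main obstacle. The strategy is to partition $\ell^{\infty}(I)$ into equivalence classes under the relation generated by the $p$-shift operator, namely $a \sim b$ whenever some iterated $p$-shift of $a$ equals some iterated $p$-shift of $b$. Axiom (i) forces $\phi$ to be constant on each class, and since $\liminf$ and $\limsup$ are invariant under finite shifts, on any class one may pick any value in the common interval $[\liminf, \limsup]$, which automatically validates (ii). The only consistency issue is for classes meeting $S$ in more than one orbit representative: if $\mathcal{O}_{\mathbf{M}}^{\ast}(v) \sim \mathcal{O}_{\mathbf{M}}^{\ast}(w)$ then $\mathbf{M}^{k}(v) = \mathbf{M}^{\ell}(w)$ for some $k,\ell$, whence $K(v) = K(\mathbf{M}^{k}(v)) = K(\mathbf{M}^{\ell}(w)) = K(w)$ by $\mathbf{M}$-invariance. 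A choice function on the remaining classes completes the construction and yields a $p$-limit-like $\phi$ satisfying $\mathcal{M}_{\phi} = K$.
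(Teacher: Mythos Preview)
Your proof is correct and follows essentially the same route as the paper's: the forward direction is identical, and for the converse both you and the paper define $\phi$ on orbits by $\phi(\mathcal{O}_{\mathbf{M}}^{\ast}(v)):=K(v)$, verify consistency via $\mathbf{M}$-invariance of $K$, and then extend to all of $\ell^{\infty}(I)$ so that (i) and (ii) are preserved. The only cosmetic difference is that you phrase the extension abstractly via equivalence classes of the $p$-shift and a choice function, whereas the paper writes out the set $\Gamma=\bigcup_{v,c} I^{cp}\times\mathcal{O}_{\mathbf{M}}^{\ast}(v)$ explicitly and takes $\phi:=\liminf$ on its complement.
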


\begin{proof}
By the definition of mean the sequence $(\max \mathbf{M}^{n}(v))_{n\in 
\mathbb{N}}$ is nondecreasing and 
\begin{equation*}
\limsup \mathcal{O}_{\mathbf{M}}^{\ast }\left( v\right)
=\limsup_{n\rightarrow \infty }\max \mathbf{M}^{n}(v)\leq \max \mathbf{M}%
^{0}(v)=\max (v).
\end{equation*}%
Similarly we obtain $\liminf \mathcal{O}_{\mathbf{M}}^{\ast }(v)\geq \min
(v) $. Now, as $\phi $ is between $\liminf $ and $\limsup $, we obtain that $%
\mathcal{M}_{\phi }$ is a mean. Moreover 
\begin{align*}
\mathcal{M}_{\phi }\circ \mathbf{M}(v)& =\phi \circ \mathcal{O}_{\mathbf{M}%
}^{\ast }(\mathbf{M}(v))=\phi \left( \lbrack \mathbf{M}(v)]_{1},\dots ,[%
\mathbf{M}(v)]_{p},[\mathbf{M}^{2}(v)]_{1},\dots ,[\mathbf{M}%
^{2}(v)]_{p},\dots \right) \\
& =\phi \left( v_{1},\dots ,v_{p},[\mathbf{M}(v)]_{1},\dots ,[\mathbf{M}%
(v)]_{p},[\mathbf{M}^{2}(v)]_{1},\dots ,[\mathbf{M}^{2}(v)]_{p},\dots \right)
\\
& =\phi \circ \mathcal{O}_{\mathbf{M}}^{\ast }(v)=\mathcal{M}_{\phi }(v)
\end{align*}%
which concludes the proof.

To prove the converse, for an arbitrary $\mathbf{M}$-invariant mean $K$, we
define function $\phi $ on the every orbit $\mathcal{O}_{\mathbf{M}}^{\ast
}(v)$ by 
\begin{equation}
\phi (\mathcal{O}_{\mathbf{M}}^{\ast }(v)):=K(v)\qquad v\in I^{p},  \label{1}
\end{equation}

For every $v\in I^{p}$ we have $\phi (\mathcal{O}_{\mathbf{M}}^{\ast
}(v))=K(v)=K\circ \mathbf{M}(v)=\phi (\mathcal{O}_{\mathbf{M}}^{\ast }(%
\mathbf{M}(v)))$ what implies that $\phi $ satisfies (i) on the image $%
\mathcal{O}_{\mathbf{M}}^{\ast }(I^{p})$.

To preserve the $p$-limit-like properly we need to extend this definition to%
\begin{equation}
\phi (\mathcal{O}_{\mathbf{M}}^{\ast }(v)):=K(v)\text{, \ \ \ \ \ }v\in I^{p}
\label{2}
\end{equation}%
\begin{equation}
\phi (I^{cp}\times \mathcal{O}_{\mathbf{M}}^{\ast }(v)):=K(v)\text{, \ \ \ \ 
}v\in I^{p},\,\,c\in \{1,2,\dots \}.  \label{3}
\end{equation}

We underline that if for some $w\in I^{p}$ and $c_{0}> 0$ we get $%
\mathcal{O}_{\mathbf{M}}^{\ast }(w)\in I^{c_{0}p}\times \mathcal{O}_{\mathbf{%
M}}^{\ast }(v)$ then by the definition $\mathbf{M}^{c_{0}}(w)=v$ and,
consequently, $K(v)=K(w)$. Therefore definitions \eqref{1} and (3) are coherent.
Moreover, it is easy to check that the set 
\begin{equation*}
\Gamma :=\bigcup_{v\in I^{p}} \Big(\big\{\mathcal{O}%
_{\mathbf{M}}^{\ast }(v)\big\} \cup \bigcup_{c=1}^{\infty }I^{cp}\times \mathcal{O}%
_{\mathbf{M}}^{\ast }(v) \Big)\subset \ell^\infty(I)
\end{equation*}%
is closed under shifting by $p$ elements (both left and right). Thus so is
the set $\Gamma ^{\prime}:=\ell^\infty (I)\setminus \Gamma $. Furthermore
properties (i) and (ii) are valid on $\phi |_{\Gamma }$.

As the value of $\phi $ on $\Gamma ^{\prime }$ does not affect the value of $%
\mathcal{M}_{\phi }$, we can define it in any way, just to keep validity of
(i) and (ii) e.g. $\phi (a):=\liminf a$ for $a\in \Gamma ^{\prime }$.
\end{proof}

\begin{corollary}
Let $\mathbf{M}\colon I^{p}\rightarrow I^{p}$ be a mean-type mapping. $%
\mathcal{L}:=\mathcal{M}_{\liminf }$ and $\mathcal{U}:=\mathcal{M}_{\limsup }
$ are the smallest and the biggest $\mathbf{M}$-invariant means,
respectively.
\end{corollary}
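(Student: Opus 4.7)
The plan is to deduce the corollary directly from the Proposition applied in both directions. First I would check that both $\liminf$ and $\limsup$ are $p$-limit-like functions on $\ell^\infty(I)$. Condition (i) (shift-invariance by $p$) holds because neither $\liminf$ nor $\limsup$ of a bounded sequence is affected by discarding any finite prefix, so in particular not by a prefix of length $p$. Condition (ii) is automatic, since $\liminf a \le \liminf a$ and $\limsup a \le \limsup a$ trivially sandwich each value between $\liminf$ and $\limsup$. Hence the forward part of the Proposition immediately gives that $\mathcal{L} = \mathcal{M}_{\liminf}$ and $\mathcal{U} = \mathcal{M}_{\limsup}$ are well-defined $\mathbf{M}$-invariant means on $I$.

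For extremality, I would take an arbitrary $\mathbf{M}$-invariant mean $K\colon I^p\to I$ and invoke the converse direction of the Proposition to represent it as $K = \mathcal{M}_\phi$ for some $p$-limit-like $\phi$. Then for every $v\in I^p$, applying property (ii) of $\phi$ to the bounded sequence $\mathcal{O}_{\mathbf{M}}^{\ast}(v)$ yields
\begin{equation*}
\mathcal{L}(v) = \liminf \mathcal{O}_{\mathbf{M}}^{\ast}(v) \le \phi\bigl(\mathcal{O}_{\mathbf{M}}^{\ast}(v)\bigr) = K(v) \le \limsup \mathcal{O}_{\mathbf{M}}^{\ast}(v) = \mathcal{U}(v),
\end{equation*}
which is exactly the required extremality.

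There is no genuine obstacle here: all the substantive work has been done in the Proposition, and the corollary is essentially the observation that $\liminf$ and $\limsup$ are the extremal $p$-limit-like functions, with property (ii) being precisely the sandwich inequality needed. The only minor point worth flagging is that shift-invariance by $p$ is a strictly weaker condition than shift-invariance by $1$, so in principle there could be $p$-limit-like functions which do not respect $1$-shifts; but this does not affect the argument, since $\liminf$ and $\limsup$ satisfy the much stronger invariance under arbitrary finite shifts.
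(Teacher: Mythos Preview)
Your proposal is correct and matches the paper's approach: the paper states the corollary without proof, treating it as an immediate consequence of the Proposition, and your argument spells out exactly that deduction --- checking that $\liminf$ and $\limsup$ are $p$-limit-like, and then reading off extremality from property~(ii) via the converse part of the Proposition.
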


\begin{remark}
Let us underline that analogous corollary can be also established if $\mathbb{M}$ is a selfmapping of compactly supported Borel measures (see \cite{DP2020} for details).
\end{remark}
 
\begin{theorem}
(\textbf{Invariance Principle}) Let $\mathbf{M}\colon I^{p}\rightarrow I^{p}$
be a mean-type mapping and $K:I^{p}\rightarrow I$ be an arbitrary mean. $K$
is a unique $\mathbf{M}$-invariant mean if and only if the sequence of
iterates $\left( \mathbf{M}^{n}\right) _{n\in \mathbb{N}}$ of the mean-type
mapping $\mathbf{M}$ converges to $\mathbf{K}:=\left( K,\dots ,K\right) $
pointwise on $I^{p}$.
\end{theorem}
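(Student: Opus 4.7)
The plan is to exploit the Corollary, which already gives us that $\mathcal{L}=\mathcal{M}_{\liminf}$ and $\mathcal{U}=\mathcal{M}_{\limsup}$ are $\mathbf{M}$-invariant means. This is the real content made available by Proposition~1, and both directions will reduce to squeezing arbitrary means between these two extremes.

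For the forward implication, assume $K$ is the unique $\mathbf{M}$-invariant mean. Then $\mathcal{L}=\mathcal{U}=K$. Fix $v\in I^p$; we get
\begin{equation*}
\liminf \mathcal{O}_{\mathbf{M}}^{\ast}(v) = K(v) = \limsup \mathcal{O}_{\mathbf{M}}^{\ast}(v),
\end{equation*}
so the bounded sequence $\mathcal{O}_{\mathbf{M}}^{\ast}(v)$ actually converges to $K(v)$. Picking off the subsequence corresponding to the $i$-th coordinate of $\mathbf{M}^{n}(v)$ gives $[\mathbf{M}^{n}(v)]_{i}\to K(v)$ for every $i$, which is exactly $\mathbf{M}^{n}(v)\to \mathbf{K}(v)$.

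For the backward implication, assume $\mathbf{M}^{n}\to \mathbf{K}$ pointwise. I would first verify that $K$ is actually $\mathbf{M}$-invariant: applying the assumption both to $v$ and to $\mathbf{M}(v)$ and comparing $\mathbf{M}^{n}(\mathbf{M}(v))=\mathbf{M}^{n+1}(v)$ forces $K(\mathbf{M}(v))=K(v)$. Next, to prove uniqueness, let $N$ be any $\mathbf{M}$-invariant mean. Iterating $N\circ \mathbf{M}=N$ gives $N(v)=N(\mathbf{M}^{n}(v))$. Since $N$ is a mean,
\begin{equation*}
\min \mathbf{M}^{n}(v)\le N(\mathbf{M}^{n}(v))\le \max \mathbf{M}^{n}(v),
\end{equation*}
and both extreme terms converge to $K(v)$ by hypothesis; the sandwich therefore yields $N(v)=K(v)$.

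I do not expect a serious obstacle. The only subtle point is the first step of the forward direction: one must notice that equality $\mathcal{L}=\mathcal{U}$ of the two extremal invariant means forces the \emph{whole} orbit sequence $\mathcal{O}_{\mathbf{M}}^{\ast}(v)$ (not merely one coordinate) to be convergent, and only afterward extract coordinatewise convergence of $\mathbf{M}^{n}(v)$. This avoids any appeal to continuity of $K$ off the diagonal, which matches the point stressed in the introduction.
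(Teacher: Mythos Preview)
Your proof is correct and follows essentially the same route as the paper: both directions hinge on the Corollary that $\mathcal{L}$ and $\mathcal{U}$ are the extremal $\mathbf{M}$-invariant means, and on the observation that $\mathcal{L}(v)=\mathcal{U}(v)$ is equivalent to convergence of the full orbit $\mathcal{O}_{\mathbf{M}}^{\ast}(v)$. The paper packages the argument as a single chain of equivalences through $K=\mathcal{L}=\mathcal{U}$, whereas you unpack the two implications separately and, in the backward direction, replace the appeal to ``$\mathcal{L}\le N\le\mathcal{U}$'' by the equivalent direct sandwich $\min\mathbf{M}^{n}(v)\le N(\mathbf{M}^{n}(v))\le\max\mathbf{M}^{n}(v)$; this is a presentational difference only.
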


\begin{proof}
We have the following equivalent conditions: 
\begin{align*}
& \qquad K\text{ is a unique }\mathbf{M}\text{-invariant mean} \\
\iff & \qquad K=\mathcal{L}=\mathcal{U} \\
\iff & \qquad K(v)=\liminf \mathcal{O}_{\mathbf{M}}^{\ast }(v)=\limsup 
\mathcal{O}_{\mathbf{M}}^{\ast }(v)\text{ for all }v\in I^{p} \\
\iff & \qquad \mathcal{O}_{\mathbf{M}}^{\ast }(v)\text{ is convergent to }%
K(v)\text{ for all }v\in I^{p} \\
\iff & \qquad \mathcal{O}_{\mathbf{M}}(v)\text{ is convergent to }\mathbf{K}%
(v)\text{ for all }v\in I^{p} \\
\iff & \qquad \mathcal{O}_{\mathbf{M}}\text{ is convergent to }\mathbf{K}%
\text{ pointwise on }I^{p} \\
\iff & \qquad \mathbf{M}^{n}\text{ is convergent to }\mathbf{K}\text{
pointwise on }I^{p},
\end{align*}%
thus the proof is complete.
\end{proof}

\section{Weakly contractive mean-type mappings}

We say that a mean-type mapping $\mathbf{M}\colon I^{p}\rightarrow I^{p}$ is 
\emph{weakly contractive} if for every nonconstant vector $v\in I^{p}$ there
is a positive integer $n_0\left( v\right) $ such that 
\begin{equation*}
\max (\mathbf{M}^{n}(v))-\min (\mathbf{M}^{n}(v))<\max (v)-\min (v) \qquad \text{ for all }n \ge n_0(v).
\end{equation*}

Let us emphasize that it is sufficient to verify if the inequality above is valid for $n=n_0(v)$. 
Moreover in a special case $p=2$ it was proved \cite{M-P} that $%
\mathbf{M}$ is weakly contractive if and only if $\mathbf{M}^{2}$ is
contractive. This is not the case here. 

Even for $p=3$ we can construct
weakly contractive mean-type mapping on $I^3$ such that the function $I^p \ni v \mapsto n_0(v)$ is unbounded. 

\begin{example}
Take a continuous and weakly-contractive mean-type mapping $\mathbf{M}_0\colon I^{3}\rightarrow I^{3}$ such that
$$\mathbf{M}_0(v_{1},v_{2},v_{3})=
\begin{cases}
\left( v_{1},v_{2},\tfrac{v_{1}+v_{3}}{2}\right) &\text{if }\left\vert
v_{3}-v_{1}\right\vert = (v_{2}-v_{1})^{2};\\
\left( v_{1},v_{1},v_1\right) &\text{if }2\left\vert
v_{3}-v_{1}\right\vert = (v_{2}-v_{1})^{2},
\end{cases}
$$
and the set $\Lambda:=\{(v_1,v_2,v_3) \in I^3 \colon \left\vert
v_3-v_1\right\vert \geq (v_2-v_1)^{2} \}$. Define a mapping $\mathbf{M}\colon I^{3}\rightarrow I^{3}$ by%
\begin{equation*}
\mathbf{M}(v_{1},v_{2},v_{3}):=\left\{ 
\begin{array}{cc}
\left( v_{1},v_{2},\tfrac{v_{1}+v_{3}}{2}\right)  & \text{if }(v_1,v_2,v_3)\in \Lambda; \\ 
\mathbf{M}_0\left( v_{1},v_{2},v_{3}\right)  & \text{otherwise.}%
\end{array}%
\right. 
\end{equation*}%
Obviosly $\mathbf{M}$ is continuous. Moreover for every $x,y,z \in I$ there exists $n_1(x,y,z)$ such that $\mathbf{M}^{n_1(x,y,z)} \in \Lambda$.
Thus $\mathbf{M}^{n_1(x,y,z)+k}(x,y,z)=\mathbf{M}_0^k(\mathbf{M}^{n_1(x,y,z)})$. Consequently as $\mathbf{M}_0$ is weakly contractive, so is $\mathbf{M}$.

Now take $x \in I$ and $i \in \mathbb{N}$ such that $x+2^{-i}\in I$. 
By simple induction we can describe the \mbox{$\mathbf{M}$-orbit} of the vector $w:=(x,x+2^{-i},x+2^{-i}) \in I^3$. Namely
\begin{equation*}
 \mathbf{M}^{n}(w)= \mathbf{M}^{n}(x,x+2^{-i},x+2^{-i})=
 \begin{cases}
(x,x+2^{-i},x+2^{-i-n})&\qquad \text{ for }n\le i+1 \\
(x,x,x) &\qquad \text{ for }n> i+1.
\end{cases}
\end{equation*}
This equality proves that for all $n < i$ we have 
$$\max (\mathbf{M}^{n}(w))-\min (\mathbf{M}^{n}(w))=\max (w)-\min (w)$$
Thus $n_0(w)\ge i$. As $i$ can be take arbitrary large (obviously $w$ depends on $i$) we have that $n_0$ cannot be bounded.
\end{example}

\begin{theorem}
If $\mathbf{M}:I^p\to I^p$ is a continuous, weakly contractive mean-type
mapping then there exists a unique $\mathbf{M}$-invariant mean $K\colon
I^p\rightarrow I$. Moreover, the sequence of iterates $\left(\mathbf{M}%
^n\right) _{n\in \mathbb{N}}$ converges (pointwise on $I^{p})$ to $\mathbf{K}%
:=\left( K,\dots,K\right) .$
\end{theorem}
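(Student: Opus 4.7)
The strategy is to reduce the statement to pointwise convergence of the iterates $\mathbf{M}^n(v)$ to constant vectors; once that is done, the Invariance Principle (Theorem~1) delivers simultaneously the existence, the uniqueness and the announced form $\mathbf{K}=(K,\dots,K)$ of the limit.

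Fix $v\in I^p$ and set $\alpha_n:=\max\mathbf{M}^n(v)$, $\beta_n:=\min\mathbf{M}^n(v)$. Because each coordinate of $\mathbf{M}(u)$ lies in $[\min u,\max u]$, the sequence $(\alpha_n)$ is nonincreasing and $(\beta_n)$ is nondecreasing, and both are trapped in $[\min v,\max v]$. Hence they converge; write $\alpha^*:=\lim\alpha_n$, $\beta^*:=\lim\beta_n$, $d^*:=\alpha^*-\beta^*\ge 0$. If I prove $d^*=0$, then all coordinates of $\mathbf{M}^n(v)$ are squeezed between $\beta_n$ and $\alpha_n$, so they converge to the common value $K(v):=\alpha^*=\beta^*\in[\min v,\max v]$. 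The function $K\colon I^p\to I$ so defined is a mean and $\mathbf{M}^n(v)\to(K(v),\dots,K(v))$; Theorem~1 then finishes the proof.

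To exclude $d^*>0$, I use compactness: the orbit lies in the cube $[\min v,\max v]^p\subset I^p$, so there is a subsequence $\mathbf{M}^{n_k}(v)\to w\in I^p$. Continuity of $\max$ and $\min$ gives $\max w-\min w=d^*>0$, hence $w$ is nonconstant. Weak contractivity provides $n_0=n_0(w)$ with $\max\mathbf{M}^{n_0}(w)-\min\mathbf{M}^{n_0}(w)<d^*$. By continuity of the iterate $\mathbf{M}^{n_0}$, $\mathbf{M}^{n_0+n_k}(v)=\mathbf{M}^{n_0}\bigl(\mathbf{M}^{n_k}(v)\bigr)\to\mathbf{M}^{n_0}(w)$, so $\alpha_{n_0+n_k}-\beta_{n_0+n_k}$ converges to something strictly less than $d^*$, contradicting $\alpha_n-\beta_n\to d^*$.

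The points that need a little care are (a) the limit $w$ of the extracted subsequence automatically lies in $I^p$ because the orbit is confined to a compact sub-cube of $I^p$, and (b) continuity must be invoked at the correct iterate $\mathbf{M}^{n_0}$, not merely at $\mathbf{M}$. Everything else (monotonicity of $(\alpha_n)$ and $(\beta_n)$, the sandwich argument, and the appeal to Theorem~1) is routine; I do not expect any genuinely hard step.
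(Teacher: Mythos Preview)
Your argument is correct and follows essentially the same route as the paper's proof: extract a convergent subsequence of the orbit by compactness, apply weak contractivity at the nonconstant limit point, and use continuity of the appropriate iterate to force the spread below its limiting value, a contradiction. The only cosmetic difference is that the paper phrases the contradiction via the extremal invariant means $\mathcal{L}$ and $\mathcal{U}$ from the Corollary (showing $\mathcal{L}=\mathcal{U}$), whereas you work directly with $\alpha^*$ and $\beta^*$; since $\mathcal{L}(v)=\beta^*$ and $\mathcal{U}(v)=\alpha^*$, the two formulations coincide, and your handling of the compactness issue via the cube $[\min v,\max v]^p\subset I^p$ is in fact slightly cleaner than the paper's blanket assumption that $I$ is closed.
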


\begin{proof}
Assume that $I$ is closed.

First observe that in view of Theorem~1 the moreover part is equivalent to
our assertion. Second, it is sufficient to prove that $\mathcal{L}=\mathcal{U%
}$. Assume to the contrary that $\mathcal{L}(v_{0})\neq \mathcal{U}(v_{0})$
for some $v_{0}\in I^{p}$.

Define a spread $\delta :=\mathcal{U}(v_{0})-\mathcal{L}(v_{0})>0$ and sets 
\begin{align*}
X_{0}& :=\{v\in I^{p}\colon \max (v)-\min (v)\geq \delta \}, \\
X_{k}& :=\{v\in I^{p}\colon \max (v)-\min (v)\in \lbrack \delta ,\delta +%
\tfrac{1}{k}]\}\qquad \text{ for }k\in \mathbb{N}_{+}, \\
X_{\omega }& :=\bigcap_{k=0}^{\infty }X_{k}=\{v\in I^{p}\colon \max (v)-\min
(v)=\delta \}.
\end{align*}%
Then for every $k\in \mathbb{N}$ there exists $n_{k}$ such that $\mathbf{M}%
^{n}(v_{0})\in X_{k}$ for all $n>n_{k}$.

As $X_0$ is compact, there exists a subsequence $(m_k)$ such that $\mathbf{M}%
^{m_k}(v_0) \in X_k$ and the sequence $(\mathbf{M}^{m_k}(v_0))$ is
convergent to some element $w_0 \in I^p$. By the definition, as the
difference $\max \mathbf{M}^{m_k}(v_0)-\min \mathbf{M}^{m_k}(v_0)$ is
nonincreasing, we have $w_0 \in X_\omega$.

As $\mathbf{M}$ is weakly contractive, there exists $s_0 \in \mathbb{N}$
such that

\begin{equation*}
\max \mathbf{M}^{s_{0}}(w_{0})-\min \mathbf{M}^{s_{0}}(w_{0})<\max
(w_{0})-\min (w_{0})=\delta .
\end{equation*}

But $\mathbf{M}$ is continuous, thus there exists an open neighbourhood $W
\ni w_0$ such that

\begin{equation*}
\max \mathbf{M}^{s_{0}}(w)-\min \mathbf{M}^{s_{0}}(w)<\delta \qquad \text{
for all }w\in W.
\end{equation*}

But, by the definition, there exists $k_0 \in \mathbb{N}$ such that $\mathbf{%
M}^{m_{k_0}}(v_0) \in W$. Then

\begin{equation}
\max \mathbf{M}^{s_{0}+m_{k_{0}}}(v_{0})-\min \mathbf{M}%
^{s_{0}+m_{k_{0}}}(v_{0})<\delta .  \label{4}
\end{equation}

On the other hand, as both $\mathcal{L}$ and $\mathcal{U}$ are $\mathbf{M}$%
-invariant means, we have 
\begin{align*}
\min \mathbf{M}^{s_{0}+m_{k_{0}}}(v_{0})& \leq \mathcal{L}\circ \mathbf{M}%
^{s_{0}+m_{k_{0}}}(v_{0})=\mathcal{L}(v_{0}), \\
\max \mathbf{M}^{s_{0}+m_{k_{0}}}(v_{0})& \geq \mathcal{U}\circ \mathbf{M}%
^{s_{0}+m_{k_{0}}}(v_{0})=\mathcal{U}(v_{0})
\end{align*}

which implies 
\begin{equation*}
\max \mathbf{M}^{s_{0}+m_{k_{0}}}(v_{0})-\min \mathbf{M}%
^{s_{0}+m_{k_{0}}}(v_{0})\geq \mathcal{U}(v_{0})-\mathcal{L}(v_{0})=\delta ,
\end{equation*}%
contradicting \eqref{4}.
\end{proof}

\section{Examples of highly discontinuous means}

\begin{example}
For a discontinuous additive function $\alpha :\mathbb{R\rightarrow R}$
define a function $\lambda _{\alpha }:\mathbb{R\rightarrow R}$ by 
\begin{equation*}
\lambda _{\alpha }\left( u\right) :=\frac{3\left\vert \alpha \left( u\right)
\right\vert +3}{4\left\vert \alpha \left( u\right) \right\vert +12}\text{, \
\ \ }u\in \mathbb{R}\text{.}
\end{equation*}%
Since 
\begin{equation}
\frac{1}{4}\leq \lambda _{\alpha }\left( u\right) <\frac{3}{4}\text{, \ \ \
\ \ }u\in \mathbb{R}\text{,}  \label{5}
\end{equation}%
the functions $M,N:\mathbb{R}^{2}\rightarrow \mathbb{R}$ defined by 
\begin{equation*}
M\left( u,v\right) =\lambda _{\alpha }\left( u\right) u+\left( 1-\lambda
_{\alpha }\left( u\right) \right) v\text{, \ \ \ \ \ }M\left( u,v\right)
=\left( 1-\lambda _{\alpha }\left( u\right) \right) u+\lambda _{\alpha
}\left( u\right) v\text{.}
\end{equation*}%
are means in $\mathbb{R}$. Both $M$ and $N$, being the means, are continuous
at every point of the diagonal $\Delta :=\left\{ \left( x,x\right) :x\in 
\mathbb{R}\right\} $ (see \cite{JM2013}) but, as the graph of $\alpha $ is
dense in $\mathbb{R}^{2}$ (see, for instance \cite{Kuczma}), these functions
are strongly irregular in $\mathbb{R}^{2}\backslash \Delta $ (in particular
they are discontinuous at every point outside of $\Delta ).$

Note that 
\begin{equation*}
A\circ \left( M,N\right) =A,
\end{equation*}%
i.e., the arithmetic mean $A:\mathbb{R}^{2}\rightarrow \mathbb{R\,}$, $%
A\left( u,v\right) =\frac{u+v}{2}$, is invariant with respect to the
mean-type mapping $\left( M,N\right) :\mathbb{R}^{2}\rightarrow \mathbb{R}%
^{2}$.

To show that $A$ is a unique $\left( M,N\right) $-invariant mean, first
observe that, \ for all $u,v\in \mathbb{R}$, 
\begin{equation*}
\left\vert M\left( u,v\right) -N\left( u,v\right) \right\vert =\left\vert
2\lambda _{\alpha }\left( u\right) -1\right\vert \left\vert u-v\right\vert 
\text{,}
\end{equation*}%
whence, in view of \eqref{5},%
\begin{equation*}
\left\vert M\left( u,v\right) -N\left( u,v\right) \right\vert \leq \frac{1}{2%
}\left\vert u-v\right\vert \text{, \ \ \ \ \ }u,v\in \mathbb{R}\text{.}
\end{equation*}%
Putting $\left( M_{n},N_{n}\right) :=\left( M,N\right) ^{n}$, $n\in \mathbb{N%
}_{0}$, we hence get 
\begin{equation*}
\left\vert M_{n+1}\left( u,v\right) -N_{n+1}\left( u,v\right) \right\vert
\leq \frac{1}{2}\left\vert M_{n}\left( u,v\right) -N_{n}\left( u,v\right)
\right\vert \text{, \ \ \ \ \ }u,v\in \mathbb{R}\text{, \ }n\in \mathbb{N}%
_{0}\text{,}
\end{equation*}%
whence, by induction,\ 
\begin{equation*}
\left\vert M_{n}\left( u,v\right) -N_{n}\left( u,v\right) \right\vert \leq 
\frac{1}{2^{n}}\left\vert u-v\right\vert \text{, \ \ \ \ \ }u,v\in \mathbb{R}%
\text{, \ }n\in \mathbb{N}\text{.}
\end{equation*}%
This proves that, for every point $\left( u,v\right) \in \mathbb{R}^{2}$,
the orbit 
$$\mathcal{O}_{\mathbf{M}}(\left( u,v\right) )=\left( \left(
M_{n}\left( u,v\right) ,N_{n}\left( u,v\right) \right) :n\in \mathbb{N}%
_{0}\right)$$
approaches the diagonal as $n\rightarrow \infty $. To show the
convergence of the orbit, take arbitrary $\left( u,v\right) $ $\in \mathbb{R}%
^{2}\backslash \Delta $ and put $c=A\left( u,v\right) =$ $\frac{u+v}{2},$ so
we have $v=2c-u$. The invariance of $A$ with respect to $\left( M,N\right) $
implies that\ $N\left( u,v\right) =2c-M\left( u,v\right) $, and, by
induction, $N_{n}\left( u,v\right) =2c-M_{n}\left( u,v\right) $ for all $%
n\in \mathbb{N}_{0}$, that is, every point $\left( M_{n}\left( u,v\right)
,N_{n}\left( u,v\right) \right) $ lays on the straight-line crossing
perpendicularly the diagonal $\Delta $ at the point $\left( c,c\right) $. It
follows that 
\begin{equation*}
\lim_{n\rightarrow \infty }\left( M_{n}\left( u,v\right) ,N_{n}\left(
u,v\right) \right) =\left( c.c\right) .
\end{equation*}
Applying Theorem 1 we conclude that $A$ is a unique $\left( M,N\right) $%
-invariant mean.
\end{example}

The result considered in this example can be easily extended to the following

\begin{proposition}
Let $\alpha :\mathbb{R\rightarrow R}$ be a discontinuous additive function, $%
\kappa \in \left( 0,1\right) $\thinspace and $b,c,d$ be real numbers such
that%
\begin{equation*}
c>1\text{, \ }0<b\leq d\text{, \ \ \ }\frac{2}{1+\kappa }\leq c\leq \frac{2}{%
1-\kappa }\text{, \ \ \ }\frac{2b}{1+\kappa }\leq d\leq \frac{2b}{1-\kappa }%
\text{, }
\end{equation*}%
and let $\lambda _{\alpha }:\mathbb{R\rightarrow R}$ be defined by 
\begin{equation*}
\lambda _{\alpha }\left( u\right) :=\frac{\left\vert \alpha \left( u\right)
\right\vert +b}{c\left\vert \alpha \left( u\right) \right\vert +d}\text{, \
\ \ }u\in \mathbb{R}\text{.}
\end{equation*}%
Then

\ (i) \ the functions $M,N:\mathbb{R}^{2}\rightarrow \mathbb{R}$ defined by%
\begin{equation*}
M\left( u,v\right) =\lambda _{\alpha }\left( u\right) u+\left( 1-\lambda
_{\alpha }\left( u\right) \right) v\text{, \ \ \ \ \ }M\left( u,v\right)
=\left( 1-\lambda _{\alpha }\left( u\right) \right) u+\lambda _{\alpha
}\left( u\right) v\text{,}
\end{equation*}%
are means in $\mathbb{R}$;

(ii) \ the means $M$,$N$ are continuous only at the points of the diagonal $%
\Delta :=\left\{ \left( x,x\right) :x\in \mathbb{R}\right\} $ and 
\begin{equation*}
\left\vert M\left( u,v\right) -N\left( u,v\right) \right\vert \leq \kappa
\left\vert u-v\right\vert \text{, \ \ \ \ \ }u,v\in \mathbb{R}\text{;}
\end{equation*}

(iii) the arithmetic mean $A:\mathbb{R}^{2}\mathbb{\rightarrow R}$, $A\left(
u,v\right) =\frac{u+v}{2}$, is a unique $\left( M,N\right) $-invariant and 
\begin{equation*}
\lim_{n\rightarrow \infty }\left( M,N\right) ^{n}=\left( A,A\right) \text{ \
\ \ \ (pointwise).}
\end{equation*}
\end{proposition}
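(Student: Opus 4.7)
The plan is to mimic the example verbatim, with the constants $\tfrac14, \tfrac34, \tfrac12$ replaced by the quantities built from $b,c,d,\kappa$. There are three tasks: (i) show $\lambda_\alpha$ maps into $[0,1]$ so that $M,N$ are genuine weighted arithmetic means; (ii) show that $|2\lambda_\alpha(u)-1|\le\kappa$ for every $u$, which immediately yields the Lipschitz-type estimate for $M-N$ and, combined with the classical fact that means are continuous on the diagonal but not elsewhere (since $\alpha$ has dense graph, see \cite{Kuczma}), establishes the regularity claims; (iii) deduce convergence of iterates to $(A,A)$ and apply Theorem 1 to get uniqueness.

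For (i), I would observe that $\lambda_\alpha(u)\ge 0$ is trivial since $b>0$, and $\lambda_\alpha(u)\le 1$ is equivalent to $(c-1)|\alpha(u)|\ge b-d$, which holds because $c>1$ and $b\le d$. For the key inequality in (ii), I would compute
\[
2\lambda_\alpha(u)-1=\frac{(2-c)|\alpha(u)|+(2b-d)}{c|\alpha(u)|+d},
\]
and handle the two sides separately. The upper estimate $2\lambda_\alpha(u)-1\le\kappa$ rearranges to $\bigl(2-c(1+\kappa)\bigr)|\alpha(u)|\le(1+\kappa)d-2b$, where the left-hand coefficient is nonpositive by $c\ge\frac{2}{1+\kappa}$ and the right-hand side is nonnegative by $d\ge\frac{2b}{1+\kappa}$. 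The lower estimate $-\kappa\le 2\lambda_\alpha(u)-1$ rearranges to $\bigl(2-c(1-\kappa)\bigr)|\alpha(u)|\ge(1-\kappa)d-2b$, where the left-hand coefficient is nonnegative by $c\le\frac{2}{1-\kappa}$ and the right-hand side is nonpositive by $d\le\frac{2b}{1-\kappa}$. Hence $|2\lambda_\alpha(u)-1|\le\kappa$, and so $|M(u,v)-N(u,v)|=|2\lambda_\alpha(u)-1|\,|u-v|\le\kappa|u-v|$.

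For (iii), the relation $A\circ(M,N)=A$ is immediate from $\lambda u+(1-\lambda)v+(1-\lambda)u+\lambda v=u+v$. Writing $(M_n,N_n):=(M,N)^n$, an induction using the estimate in (ii) gives $|M_n(u,v)-N_n(u,v)|\le\kappa^n|u-v|\to 0$. Since $A$ is $(M,N)$-invariant, $M_n(u,v)+N_n(u,v)=u+v=2c$ for every $n$, so the orbit lies on the line perpendicular to the diagonal through $(c,c)$ and collapses to that point as $n\to\infty$. Thus $(M,N)^n\to(A,A)$ pointwise, and Theorem 1 (the Invariance Principle) then gives that $A$ is the unique $(M,N)$-invariant mean.

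The only step that requires any care is the contraction bound in (ii): the four two-sided constraints on $c$ and $d$ are designed precisely so that the numerator $(2-c)|\alpha(u)|+(2b-d)$ of $2\lambda_\alpha(u)-1$ can be bounded, independently of the value of $|\alpha(u)|$, by $\pm\kappa$ times the positive denominator $c|\alpha(u)|+d$. Once this bookkeeping is done, the rest of the argument is essentially identical to the one written out for $\kappa=\tfrac12$, $b=3$, $c=4$, $d=12$ in the preceding example.
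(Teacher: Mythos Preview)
Your proposal is correct and follows exactly the route the paper intends: the paper gives no separate proof of this proposition, merely stating that ``the result considered in this example can be easily extended to the following,'' so the implicit proof is precisely the argument of Example~2 with the numerical constants replaced by $b,c,d,\kappa$, which is what you carry out. Two minor points for the write-up: in part~(iii) you reuse the letter $c$ for $A(u,v)$, clashing with the parameter $c$ already in the statement, so pick a different symbol; and in part~(ii) the phrase ``means are continuous on the diagonal but not elsewhere'' should be sharpened to say that \emph{these particular} $M,N$ are discontinuous off $\Delta$ because $\lambda_\alpha$ is (the map $t\mapsto (t+b)/(ct+d)$ is non-constant and $|\alpha|$ has dense graph), not that this is a general fact about means.
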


\begin{remark}
This result remains true on replacing $\alpha \left( u\right) $\ by\ $\alpha
\left( f\left( u,v\right) \right) $ where $f:\mathbb{R}^{2}\mathbb{%
\rightarrow R}$ is an arbitrary nonconstant regular function.
\end{remark}

\begin{remark}
It is not difficult to observe that the above proposition can be modified to
a result in which $A$ is replaced by an arbitrary quasi-arithmetic mean. \ 
\end{remark}

\section{An applications in solving a functional equation}

\bigskip Applying Theorem 1 we prove the following

\begin{theorem}
Assume that $\mathbf{M}\colon I^{p}\rightarrow I^{p}$ is a mean-type mapping
and $K:I^{p}\rightarrow I$ is its unique $\mathbf{M}$-invariant mean. A
function $F:I^{p}\rightarrow \mathbb{R}$ which is continuous on the diagonal $%
\Delta \left( I^{p}\right) :=\left\{ \left( u_{1},...,u_{p}\right) \in
I^{p}:u_{1}=...=u_{p}\right\} $ is invariant with respect to the mean-type
mapping, i.e. $F$ satisfies the functional equation%
\begin{equation}
F\circ \mathbf{M}=F,  \label{6}
\end{equation}
if, and only if, there is a continuous function $\varphi :I\rightarrow 
\mathbb{R}$ such that 
\begin{equation*}
F=\varphi \circ K\text{. }
\end{equation*}
\end{theorem}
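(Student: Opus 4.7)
The plan is to prove both implications separately, with the nontrivial direction being a direct application of Theorem~1 (the Invariance Principle).

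For the easy direction, suppose $F=\varphi\circ K$ with $\varphi$ continuous on $I$. Then $F\circ\mathbf{M}=\varphi\circ K\circ\mathbf{M}=\varphi\circ K=F$ by $\mathbf{M}$-invariance of $K$, so \eqref{6} holds. Moreover, for $v=(u,\dots,u)\in\Delta(I^p)$ one has $K(v)=u$ by the mean property, so $F(u,\dots,u)=\varphi(u)$, and continuity of $\varphi$ immediately gives continuity of $F$ on the diagonal.

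For the converse, assume $F$ satisfies \eqref{6} and is continuous on $\Delta(I^p)$. Iterating the functional equation, $F=F\circ\mathbf{M}^n$ for every $n\in\mathbb{N}$, hence $F(v)=F(\mathbf{M}^n(v))$ for all $v\in I^p$. Since $K$ is the unique $\mathbf{M}$-invariant mean, Theorem~1 gives
\begin{equation*}
\lim_{n\to\infty}\mathbf{M}^n(v)=\mathbf{K}(v)=(K(v),\dots,K(v))\in\Delta(I^p).
\end{equation*}
The limit is a diagonal point, and $F$ is continuous at every point of $\Delta(I^p)$, so we may pass to the limit on the right-hand side to obtain
\begin{equation*}
F(v)=\lim_{n\to\infty}F(\mathbf{M}^n(v))=F(K(v),\dots,K(v)).
\end{equation*}
This suggests defining $\varphi:I\rightarrow\mathbb{R}$ by $\varphi(t):=F(t,\dots,t)$; then $F(v)=\varphi(K(v))$ as required.

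The only remaining point is the continuity of $\varphi$, but this is immediate: the diagonal embedding $t\mapsto(t,\dots,t)$ is a continuous map from $I$ into $\Delta(I^p)$, and $F|_{\Delta(I^p)}$ is continuous by hypothesis, so $\varphi$ is a composition of continuous maps. I do not foresee any genuine obstacle; the entire content of the theorem lies in the fact that Theorem~1 pushes every orbit to the diagonal, which is precisely where continuity of $F$ has been assumed, so the limit argument goes through with no extra work.
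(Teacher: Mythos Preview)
Your proof is correct and follows essentially the same route as the paper: iterate the invariance relation, invoke Theorem~1 to push orbits to the diagonal, use continuity of $F$ there to pass to the limit, and read off $\varphi(t)=F(t,\dots,t)$. If anything you are slightly more careful than the paper, since you explicitly verify that $F$ is continuous on the diagonal in the ``easy'' direction and that $\varphi$ is continuous in the ``hard'' direction, whereas the paper leaves both of these implicit.
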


\begin{proof}
Assume first that $F:I^{p}\rightarrow \mathbb{R}$ that is continuous on the
diagonal $\Delta \left( I\right) $ and $F$ satisfies \eqref{6}. From \eqref{6} by
induction we get%
\begin{equation*}
F=F\circ \mathbf{M}^{n},\text{ \ \ \ \ }n\in \mathbb{N}_{0}.
\end{equation*}%
By Theorem 1 the sequence of mean-type mappings $\left( \mathbf{M}%
^{n}\right) _{n\in \mathbb{N}_{0}}$ converges pointwise to the mean-type
mapping $\mathbf{K}=\left( K,...,K\right) :I^{p}\rightarrow I^{p}$. Since $F$
is continuous on the diagonal $\Delta \left( I\right) $, we hence get for
all $u=\left( u_{1},...,u_{p}\right) \in I^{p}$, \ 
\begin{eqnarray*}
F\left( u_{1},...,u_{p}\right) &=&\lim_{n\rightarrow \infty }F\left( \mathbf{%
M}^{n}\left( u_{1},...,u_{p}\right) \right) =F\left( \lim_{n\rightarrow
\infty }\left( \mathbf{M}^{n}\left( u_{1},...,u_{p}\right) \right) \right)\\
&=&F\left( \mathbf{K}\left( u_{1},...,u_{p}\right) \right) =F\left( \left( K\left( u_{1},...,u_{p}\right) ,...,K\left(
u_{1},...,u_{p}\right) \right) \right) ,
\end{eqnarray*}
whence, setting%
\begin{equation*}
\varphi \left( t\right) :=F\left( t,...,t\right) \text{, \ \ \ \ \ }t\in I\,%
\text{,}
\end{equation*}%
we obtain $F\left( u_{1},...,u_{p}\right) =\varphi \left( K\left(
u_{1},...,u_{p}\right) \right) $ for all $\left( u_{1},...,u_{p}\right) \in
I^{p}$, that is$\ F=\varphi \circ K$.

To prove the converse implication, take an arbitrary function $\varphi
:I\rightarrow \mathbb{R}$ and put $F:=\varphi \circ K$. Then we have 
\begin{equation*}
F\circ \mathbf{M=}\left( \varphi \circ K\right) \circ \mathbf{M}=\varphi
\circ \left( K\circ \mathbf{M}\right) =\varphi \circ K=F,
\end{equation*}%
which completes the proof.
\end{proof}

\begin{remark}
The assumption of the continuity of the restriction of the function$\ F$ on
the diagonal $\Delta \left( I^{p}\right) $ is essential.

To show it take arbitrary (not necessarily continuous) function $\varphi
:I\rightarrow \mathbb{R}$ and define $F:I^{p}\rightarrow \mathbb{R}$ by 
\begin{equation*}
F\left( u_{1},...,u_{p}\right) :=\varphi \left( t\right) \text{ \ \ if \ \ }%
\lim_{n\rightarrow \infty }\mathbf{M}^{n}\left( u_{1},...,u_{p}\right)
=\left( t,...,t\right) .
\end{equation*}%
Since $\lim_{n\rightarrow \infty }\mathbf{M}^{n}\left(
u_{1},...,u_{p}\right) =\lim_{n\rightarrow \infty }\mathbf{M}^{n}\left( 
\mathbf{M}\left( u_{1},...,u_{p}\right) \right) ,$we have for all $u\in
I^{p} $, 
\begin{equation*}
F\left( u\right) =F\left( \mathbf{M}\left( u\right) \right) \text{.}
\end{equation*}
\end{remark}

\begin{remark}
If $F$ is a pre-mean then $\varphi=\id$ and consequently $F=K$.
Therefore if threre exists a uniquely determined $\mathbf{M}$-invariant mean then it is also the unique $\mathbf{M}$-invariant premean which is continuous on the diagonal.
% in view of Theorem 1, the continuity of $F$ is
 %superfluous (see \cite{JM2013}).
\end{remark}

\bigskip

\bigskip

\bigskip

\end{document}